\newtheorem{proposition}{Proposition}
\begin{document}
\author{Vladimir Uspenskiy\footnote{Moscow State Lomonosov University}, Alexander Shen\footnote{LIRMM CNRS and University of Montpellier, 161 rue Ada, Montpellier, France. On leave from IITP RAS. Supported by ANR-15-CE40-0016-01 RaCAF grant, \texttt{alexander.shen@lirmm.fr}}}
\title{Algorithms and Geometric Constructions}
\date{}
\maketitle
\begin{abstract}
It is well known that several classical geometry problems (e.g., angle trisection) are unsolvable by compass and straightedge constructions. But what kind of object is proven to be non-existing by usual arguments? These arguments refer to an intuitive idea of a geometric construction as a special kind of an ``algorithm'' using restricted means (straightedge and/or compass). However, the formalization is not obvious, and different descriptions existing in the literature are far from being complete and clear. We discuss the history of this notion and a possible definition in terms of a simple game.
\end{abstract}

\section{Introduction}
The notion of an algorithm as an intuitively clear notion that precedes any formalization, has a rather short history. The first examples of what we now call algorithms were given already by Euclid and al-Khw\^arizm\^\i. But the general idea of an algorithm seems to appear only in 1912 when Borel considered ``les calculus qui peuvent \^etre r\'eellement effectu\'es''\footnote{the computations that can be really performed} and emphasized: ``Je laisse intentionnellement de c\^ot\'e le plus ou moins grande longeur pratique des op\'erations; l'essentiel est que chaqune de ces op\'erations soit ex\'ecutable en un temps fini, par une m\'ethode s\^ure et sans ambigu\"\i te''\footnote{I intentionally put aside the question of bigger or smaller practical length of the operation; it is important only that each of the operations can be performed in a finite time by a clear and unambiguous method''}~\cite[p.~162]{borel1912}. The formal definition of a representative class of algorithms was given in 1930s (in the classical works of G\"odel, Church, Kleene, Turing, Post and others); the famous Church--Turing thesis claims that the class of algorithms provided by these formal definitions is representative (for each algorithm in the intuitive sense there exists an equivalent algorithm in the sense of the formal definition). Later the appearance of computers and programming languages made natural to consider computer programs as a representative class of algorithms (ignoring quantitative restrictions imposed by the finite size of computer memory). Finally, one could say that the abundance of computers (and programmers) will soon make people who have understood the intuitive idea of algorithm \emph{before} learning a programming language an extinct species. If  (when) this happens, the intuitive notion of an algorithm would disappear thus reducing the Church--Turing thesis to a definition.

In this paper we look at the history of another related notion: the notion of a \emph{geometric construction}. One may consider geometric constructions as a special type of algorithms that deal with geometric objects. Euclid provided many examples of geometric constructions by compass and straightedge (ruler); later these constructions became a standard topic for high school geometry exercises. Several classical problems (angle trisection, doubling the square, squaring the circle) were posed and remained unsolved since ancient times (though solutions that involve more advanced instruments than compass and straightedge were suggested). These problems were proved to be unsolvable in 19th century. One would expect that the proof of unsolvability assumes as a prerequisite a rigorously defined notion of a ``solution'' that does not exist. Recall that the first undecidability proofs could appear only after an exact definition of an algorithm was given.

However, historically this was not the case and the impossibility proofs appeared without an exact definition of a ``geometric construction''. These proofs used the algebraic approach: For example, to show that the cube cannot be doubled, one proves that $\sqrt[3]{2}$ cannot be obtained from rationals by arithmetic operations and square roots. The reduction from a geometric question to an algebraic one looks quite obvious and was omitted by Wantzel who first proved the impossibility of angle trisection and cube doubling. As he wrote in~\cite{wantzel1837}, ``pour reconnaitre si la construction d'un probl\`eme de G\'eometrie peut s'effectuer avec la r\`egle et le compas, if faut chercher s'il est possible de faire d\'ependre les racines de l'equation \`a laquelle il conduit de celles d'un syst\`eme d'\'equations du second degr\'e''.\footnote{To find out whether a geometric problem can be solved by straightedge and compass construction, one should find whether it is possible to reduce the task of finding the roots of the corresponding equation to a system of equations of second degree.}  This is said in the first paragraph of the paper and then Wantzel considers only the algebraic question.

Several other interesting results were obtained in 19th century. It was shown that all constructions by compass and straightedge can be performed using the compass only (the \emph{Mohr--Mascheroni theorem}) if we agree that a line is represented by a pair of points on this line. Another famous result from 19th century, the \emph{Poncelet--Steiner theorem}, says that if a circle with its center is given, then the use of compass can be avoided, straightedge is enough. Other sets of tools were also considered, see, e.g., \cite{bieberbach1952,martin1998,hilbert1902}. 

Later geometric construction became a popular topic of recreational mathematics (see, e.g., \cite{rademacher1933,courant1996,manin1963,kac-ulam1968}). In most of the expositions the general notion of a geometric construction is still taken as granted, without a formal definition, even in the nonexistence proofs (e.g., when explaining Hilbert's proof that the center of a circle cannot be found using only a straightedge~\cite{rademacher1933,courant1996,kac-ulam1968}; see below Section~\ref{sec:error} about problems with this argument). Sometimes a definition for some restricted class of geometric construction is given (see, e.g.,~\cite{tao2011}). In~\cite{manin1963} an attempt to provide a formal definition is made, still it remains ambiguous with respect to the use of ``arbitrary points'' (see Section~\ref{sec:arbitrary}).  Baston and Bostock~\cite{baston-bostock1990} observe that the intuitive idea of a ``geometric construction'' has no adequate formal definition and discuss several examples but do not attempt to give a formal definition that is close to the intuitive notion.  It seems that even today people still consider the intuitive notion of a ``geometric construction algorithm'' as clear enough to be used without a formal definition (cf.~\cite{akopyan-fedorov2017}, especially the first \texttt{arxiv} version).

In Section~\ref{sec:derivable} we consider a na\"\i ve approach that identifies constructible points with the so-called ``derivable'' points. Then in Sections~\ref{sec:uniform} and~\ref{sec:arbitrary} we explain why this approach contradicts our intuition. In Section~\ref{sec:game} we suggest a more suitable definition, and finally in Section~\ref{sec:error} we note that the absence of formal definitions has led to incorrect proofs.

\section{Derivable Points and Straight-Line Programs}\label{sec:derivable}

At first it seems that the definition of a geometric construction is straightforward. We have three classes of geometric objects: points, lines and circles. Then we consider some operations that can be performed on these objects. We need to obtain some object (the goal of our construction) applying the allowed operations to given objects. As Tao~\cite{tao2011} puts it, 
\begin{quote}
Formally, one can set up the problem as follows. Define a configuration to be a finite collection $\mathcal{C}$ of points, lines, and circles in the Euclidean plane. Define a construction step to be one of the following operations to enlarge the collection $\mathcal{C}$:

\begin{itemize}
\item
(Straightedge) Given two distinct points $A$, $B$ in $\mathcal{C}$, form the line $\overline{AB}$ that connects $A$ and $B$, and add it to $\mathcal{C}$.

\item
(Compass) Given two distinct points $A$, $B$ in $\mathcal{C}$, and given a third point $O$ in $\mathcal{C}$ (which may or may not equal $A$ or $B$), form the circle with centre $O$ and radius equal to the length $|AB|$ of the line segment joining $A$ and $B$, and add it to $\mathcal{C}$.

\item
(Intersection) Given two distinct curves $\gamma$, $\gamma'$ in $\mathcal{C}$ (thus $\gamma$ is either a line or a circle in $\mathcal{C}$, and similarly for $\gamma'$), select a point $P$ that is common to both $\gamma$ and $\gamma'$ (there are at most two such points), and add it to $\mathcal{C}$.
\end{itemize}

We say that a point, line, or circle is constructible by straightedge and compass from a configuration $\mathcal{C}$ if it can be obtained from $\mathcal{C}$  after applying a finite number of construction steps.
\end{quote}%

We can even try to define the geometric construction algorithm as a straight-line program, a sequence of assignments whose left-hand side is a fresh variable and the right-hand side contains the name of the allowed operation and the names of objects (variables) to which this operation is applied. A definition of this type is given by Huckenbeck~\cite{huckenbeck,huckenbeck2} where this type of programs is called $\textit{GCM}_0$.

Baston and Bostock~\cite{baston-bostock1990} use the name ``derivable'' for objects that can be obtained in this way starting from given objects. In other words, starting with some set of given objects, they consider its closure, i.e., the minimal set of objects that contains the given ones and is closed under allowed operations. The objects that belong to this closure are called \emph{derivable} from the given ones. In these terms, the impossibility of trisecting the angle with the compass and the straightedge can be stated as follows: \emph{for some points $A,B,C$ the trisectors of the angle $BAC$ are not derivable from $\{A,B,C\}$}.

Baston and Bostock note that the intuitive notion of a ``constructible'' point (that they intentionally leave without any definition) may differ from the formal notion of a derivable point in both directions. We discuss the possible differences in the following sections.

\section{Uniformity and Tests}\label{sec:uniform}

There are some problems with the approach based on the derivability. First of all, this appoach is ``non-uniform''. Asking a high school student to construct, say, a center of an inscribed circle of a triangle $ABC$, we expect the solution to be some specific construction that works \emph{for all triangles}, not just the proof that this center is always derivable from $A$, $B$, and $C$. The na\"\i ve approach would be to ask for a straight-line program that computes this center starting from $A$, $B$, and $C$. However, an obvious problem arises: the operation of choosing an intersection point of two curves is non-deterministic (we need to choose one of the two intersection points).
For example, Engeler~\cite{engeler1968} includes two operations in his list of ``operating capabilities'':
\begin{quote}
\begin{description}
 \item[(6)] $P_i:=(\gamma_j,\gamma_k)$, choose an intersection point of two intersecting circles;
 \item[(7)] $P_i=(P_s,\gamma_j,\gamma_k)$, given an intersection point of two circles, find the other intersection point.
\end{description}
\end{quote}
(p.~65; similar operations for lines and circles are also in the list.) In this way the straight-line program is no more deterministic.  We may guarantee only that \emph{some} run of the program produces the required object, or guarantee that the required object is among the objects computed by this program. This is a common situation for classical constructions. For example, the standard construction of the centre of the incircle of a triangle can also produce centres of excircles (the circles outside the triangle that touch one of its sides and the extensions of two other sides).

Bieberbach~\cite[p.~26]{bieberbach1952} describes this problem as follows:
\begin{quote}
\foreignlanguage{german}{%
Noch mag aber ausdrücklich hervorgehoben werden, daß es sich in diesem Paragraphen ebenso wie bei den Poncelet--Steinerschen Konstruktionen stets um ein Konstruieren in der orientierten Ebene handelt. Es soll bei jedem gegebenen und bei jedem konstruierten Punkt feststehen, welches die Vorzeichen seiner Koordinaten sind. Anderenfalls steht nur fest, daß der gesuchte Punkt sich unter den konstruierten befindet.}\footnote{One should say explicitly that the constructions of this paragraph (as well as the Poncelet--Steiner constructions) are performed in the oriented plane. For each given and constructed point it should be determined what the signs of its coordinates are. Otherwise it is only certain that the point in question is among the constructed points.}
\end{quote}
Bieberbach then adds (on p.~151):
\begin{quote}
\foreignlanguage{german}{
\textsc{H.Tietze} hat (l.c.,\S 4) zu dem Ergebnis dieses Paragraph die folgende Bemerkung gemacht: Bei der Ausführung der Konstruktion erhält man im Schnitt eines Kreises mit einer Geraden oder einem anderen Kreis stets mehrere Punkte. Bei der Fortsetzung der Konstruktion bedarf es dann noch einer Angabe darüber, welcher der beiden beim $n$-ten Schritt erhaltenen Punkte bei der Fortsetzung der Konstruktion benutzt werden soll. Man kann fragen, bei welchen Konstruktionsaufgaben es stets gleichgültig ist, welchen der beim $n$-ten Schritt erzeugten Punkte man beim $(n+1)$-ten Konstruktionsschritt verwendet.}\footnote{Tietze~\cite[\S 4]{tietze1944} considered the problem mentioned in this section and noted that the intersection of a circle with a line or other circle contains several points, and one should specify which or the points obtained at the $n$th step of the construction should be used. One may ask for which constructions it does not matter which point is used for the next $(n+1)$th step.}
\end{quote}

Indeed, Tietze studied this question in several papers, starting from 1909~\cite{tietze1909,tietze1940,tietze1944}. He noted the necessity of ordering tests for standard constructions already in~\cite{tietze1909} amd improves the exposition in~\cite{tietze1940}. Then~\cite[pp.~228--229]{tietze1944} he considers also the question: which \emph{distances} can be constructed without ordering tests? Tietze notes that one can construct a distance that is $\sqrt{3}$ times longer that a given one (just consider the distance between the intersection points of two circles with centres $A$, $B$ and radius $AB$) but this is not the case, say,  for $\sqrt{3}+1$ instead of $\sqrt{3}$.

One could give up and consider the non-uniform setting only. As Manin~\cite[p.~209]{manin1963} puts it, ``we ignore how to choose the required point from the set of points obtained by the construction'' (\foreignlanguage{russian}{<<остаётся в стороне вопрос о $\langle\ldots\rangle$ выборе из построенной совокупности точек>>}).
Another approach is to replace straight-line programs by decision trees where tests appear as internal nodes. Engeler~\cite[p.~66]{engeler1968} lists~$8$~types of tests (equality, incidence, non-empty intersection of lines and curves, order on a line, equal distances between two pairs of points).\footnote{One can also include the orientation test that asks whether a triangle $ABC$ has positive or negative orientation. Without such a test, one cannot construct the point $(0,1)$ given the points $(0,0)$ and $(1,0)$ in a uniform setting.} Still none of these two approaches (decision trees or non-deterministic choice) is enough to save some classical constructions in a uniform setting as observed by~Baston and Bostock~\cite[p.~1020]{baston-bostock1990}. They noted that the construction from Mohr--Mascheroni theorem allows us to construct the intersection point of two intersecting lines $AB$ and $CD$ (given $A,B,C,D$) using only a compass. Each use of the compass increases the diameter of the current configuration at most by an $O(1)$-factor, and the intersection point can be arbitrarily far even if $A,B,C,D$ are close to each other, so there could be no \emph{a priori} bound on the number of steps. The necessity of an iterative process in the Mohr--Mascheroni theorem was earlier mentioned in another form by Dono Kijne~\cite[ch.~VIII, p.~99]{kijne1956}; he noted that this result depends on Archimedes' axiom.

To save the Mohr--Mascheroni construction, one may consider programs that allow loops. This was suggested, e.g., by Engeler~\cite{engeler1968}. Here we should specify what kind of data structures are allowed (e.g., whether we allow dynamic arrays of geometric objects or not).\footnote{For example, Huckenbeck~\cite{huckenbeck} considers the programs fixed number of variables called ``$\textit{GSM}_0$s with conditional jumps''. He does not specify the class of tests, but it is not clear whether, say, Mohr--Mascheroni construction can be implemented in a natural way in this framework.} In this way we encounter another problem, at least if we consider straightedge-only constructions in the \emph{rational} plane $\mathbb{Q}^2$ and allow using tests, and at the same time we do not bound the number of steps/objects. Baston and Bostock~\cite{baston-bostock1990} observed that having four different points $A,B,C,D\in\mathbb{Q}^2$ in a general position (no three points lie on a line, no two connecting lines are parallel), we can enumerate all (rational) points and therefore all rational lines. Then we can wait until a line parallel to $AB$ appears (we assume that we may test whether two given lines intersect or are parallel) and then use this parallel line to find the midpoint of $AB$. This construction does not look like a intuitively valid geometric construction and contradicts the belief that one cannot construct the midpoint using only a straightedge, see~\cite{baston-bostock1990} for details.

\section{Arbitrary Points}\label{sec:arbitrary}

Let us now consider the other (and probably more serious) reason why the notion of a derivable object differs from the intuitive notion of a constructible object. Recall the statement about angle trisection as stated by Tao~\cite{tao2011}: for some triangle $ABC$ the trisectors of angle $BAC$ are not derivable from $\{A,B,C\}$. (Tao uses the word ``constructible'', but we keep this name for the intuitive notion, following~\cite{baston-bostock1990}.) Tao interprets this statement as the impossibility of angle trisection with a compass and straightedge, and for a good reason.

On the other hand, the center of a circle is not derivable from the circle itself, for the obvious reason that no operation can be applied to enlarge the collection that consists only of the circle. Should we then say that the center of a given circle cannot be constructed by straightedge and compass? Probably not, since such a construction is well known from the high school. A similar situation happens with the construction of a bisector of a given angle (a configuration consisting of two lines and their intersection point).

Looking at the corresponding standard constructions, we notice that they involve another type of steps, ``choosing an arbitrary point'' (on the circle or elsewhere). But we cannot just add the operation ``add an arbitrary point'' to the list of allowed operations, since all points would become derivable.  So what are the ``arbitrary points'' that we are allowed to add?  Bieberbach~\cite[p.~21]{bieberbach1952} speaks about \foreignlanguage{german}{``Punkte, \"uber die keine Angaben affiner oder metrischer Art gemacht sind''}\footnote{points for which we do not have affine or metric information} and calls them \foreignlanguage{german}{``willk\"urliche Punkte''}(arbitrary points) --- but this hardly can be considered as a formal definition.

Tietze does not consider the problem of arbitrary points; in~\cite{tietze1944} he writes:
\begin{quote}
\foreignlanguage{german}{18. Was wir nicht in unsere Betrachtungen einbezogen haben, sind willkürlich gewählte Elemente, wie sie bei manchen Konstruktionen eine Rolle spielen. Diese Rolle ist aber nicht ganz so einfach, als es bisweilen dargestellt wird, weil beispielsweise eine willkürlich gewählte Gerade, je nach der getroffenen Wahl, mit einer gegebenen oder bereits konstruierten oder später zu konstruierenden Kreislinie (oder auch mit der Figur schon angehörenden oder später zu konstruierenden Geraden) Schnittpunkte haben kann oder nicht. $\langle\ldots\rangle$ Ob es einmal in einer Fortsetzung dieser Note zu einer Besprechung auch dieser Dinge kommen wird, muß offen bleiben.}\footnote{What we do not include in our analysis are arbitrary elements that are used in some constructions. Their role is not so simple, however, as it is sometimes thought, since, for example, an arbitrary line may (depending on the choice made) intersect or may not intersect some circle that is already constructed or will be constructed later; the same is true for other figures formed by existing or future lines. $\langle\ldots\rangle$ We may (or may not) return to this question in the continuation of this note.}
\end{quote}

Baston and Bostock~\cite{baston-bostock1990} do not even attempt to give a formal definition of a constructible point. However, they acknowledge that the use of ``arbitrary'' points in necessary for some constructions, and make some (rather vague, to be honest) remarks about that (p.~1018):
\begin{quote}
\emph{Result} 1. Given three distinct collinear points $A$ , $B$, and $C$ such that $B$ is the midpoint of the segment $AC$, then it is possible using a ruler alone to construct the midpoint $M$ of the segment $AB$.

Here we see that $L(\{A, B, C\}) = \{A, B, C\}$ [the left-hand side is the closure of points $A$, $B$, $C$] so, although the midpoint $M$ is meant to be constructible from the set $\{A, B, C\}$, it is not actually derivable. Of course the constructibility of $M$ depends on the (not unreasonable) ability to choose arbitrary$^1$ points not on the line $ABC$. Although the possibility of sets giving rise to constructible points which are not derivable may be somewhat disturbing, it can be of some comfort that such sets are probably quite limited. Fairly clear evidence will appear later which essentially indicates that if a set $E$ contains at least four points, not the vertices of a parallelogram and no three of them collinear, then the ability to choose arbitrary points (with reasonable properties) will not permit the construction with ruler alone of any point which is not $l$-derivable [=derivable using a ruler only]. In the case of ruler-and-compasses or compasses-only constructions, we suspect the situation is even simpler. Although we will not be presenting any real evidence to support the view, we do have a strong feeling that, whenever $E$ contains at least two points, there are no constructible points which are not then derivable. Thus our overall conclusion is that the distinction between constructibility and derivability arising from the use of arbitrary points is not very complex. Since, in addition, this aspect of constructibility does not actively cause the inconsistencies in the literature to be described presently, we will not pursue a more detailed analysis in this direction.
\end{quote}
The footnote ``${}^1$'' says ``The interested reader may wish to consult [4,p.~79] where an elementary approach to the use of arbitrary points can be found.'' The reference points to the book~\cite{kutuzov1950}; however,  the corresponding explanations (p.~46 of the Russian original version) are also far from being clear:
\begin{quote}
One often uses \emph{arbitrary} elements in the geometric constructions. The option of adding arbitrary elements to the current configuration needs to be restricted, and there restrictions are usually formulated as follows:

\textsc{Restriction A}. \emph{One may consider an arbitrary point of the plane outside the given line as constructed}.

\textsc{Restriction B}. \emph{One may consider an arbitrary point on a given line that differs from the already constructed points in this line, as constructed}.

The items A and B are essential for many construction problems. $\langle\ldots\rangle$

Of course, when proving the correctness of a construction that \emph{uses arbitrary elements} we should not use any special properties of these elements and should consider them as essentially arbitrary points.\footnote{\foreignlanguage{russian}{Часто пользуются при построениях \emph{произвольными элементами}. Возможность приобщения произвольных точек к числу данных или уже построенных нуждается в особых оговорках, которые обычно формулируются в виде следующих требований:}

\foreignlanguage{russian}{Т\,р\,е\,б\,о\,в\,а\,н\,и\,е\, A. \emph{Может считаться построенной произвольная точка плоскости вне данной прямой}.}

\foreignlanguage{russian}{Т\,р\,е\,б\,о\,в\,а\,н\,и\,е\, B. \emph{Может считаться построенной произвольная точка на данной прямой, не совпадающая ни с одной из уже построенных на прямой точек}.}

\foreignlanguage{russian}{Требования A и B являются существенными при решении многих задач. $\langle\ldots\rangle$}

\foreignlanguage{russian}{При доказательстве правильности того или иного построения \emph{с использованием произвольных элементов} мы не должны, конечно, опираться на какие-либо специальные свойства этих элементов, а должны существенным образом считать эти элементы произвольными.}}
\end{quote}

Another popular exposition~\cite[p.~18]{kac-ulam1968} explains the use of arbitrary elements (in the context of finding the center of a given circle by a straightedge only) as follows:

\begin{quote}
What is a construction by straightedge alone? It is a \emph{finite} succession of steps, each requiring that either a straight line be drawn, or a point of intersection of two lines or of a line and the given circle be found. A straight line can be drawn through two points chosen more or less arbitrarily. For example, a step may call for choosing two arbitrary points on the circumference of the circle and joining them by a straight line. Or it may be drawn through two points, one or both of which have been determined in a previous step in the construction as intersections of lines or a line and the circle. The succession of steps eventually must yield a point that can be proved to be the centre of our circle.  
\end{quote}

Both quotes speak about \emph{proofs} that the constructed point has the desired properties (though do not specify what kind of proofs they have in mind). This is an old theme that goes back to Hilbert. In his classical book on the foundations of geometry~\cite{hilbert1902} there is a chapter ``Geometrical constructions based upon the axioms I--V'' that speaks, e.g., about ``those problems in geometrical constructions, which may be solved by the assistance of only the axioms I--V'', but there are no exact definitions of what does it mean. 

Probably the most detailed exposition of the role of arbitrary points is given by Manin in~\cite{manin1963} (an encyclopedia of elementary mathematics addressed to advanced high school students and teachers):
\begin{quote}
It is convenient to describe the construction process inductively. We start with a finite set of points of the plane and want to obtain another finite set of points. The construction process adds some new points to the existing ones, and then selects the answer to our problem among the points constructed. The second (selection) part depends on the nature of the problem; now we are interested in the process of \,a\,d\,d\,i\,n\,g\, new points.

By a \emph{construction step} we mean the process of adding \emph{one} new point. To find this new point, we perform some operations. \emph{\,B\,y\ \ d\,e\,f\,i\,n\,i\,t\,i\,o\,n}, a compass and straightedge construction may use only the following operations (note that operations 1 and 2 can be used several times while operation 3 is used once during the construction step).

1. \emph{Drawing a line through two points from a current set of points}. (By the current set of points we mean the result of the preceding construction step or the initial set for the first step.)

2. \emph{Drawing a circle whose center is a point from the current set that contains some other point from the current set.} [Manin does not allow to draw a circle that has center $A$ and radius $BC$ where $A,B,C$ are already constructed, but this does not matter much.] $\langle\ldots\rangle$

3. \emph{Choosing one intersection points of the lines and circles constructed and adding this point to the current set.}

Instead of operation 3 where some \,s\,p\,e\,c\,i\,f\,i\,c\, point is added, we sometimes need to use the following operation

3a. \emph{Choosing an ``arbitrary'' point and adding it to the current set.} 

One should specify what do we mean by ``arbitrary'' point. By definition, it means that the point can be ``arbitrarily'' chosen either on some line segment, or arc of a circle, or in some part of the plane bounded by segments or arcs (this part can also be unbounded). When doing this, we may use only the lines and circles constructed at the current construction step, and the endpoints of the segments and arc should belong to the current set. We may assume, of course, that the ``arbitrary  point'' is not an endpoint of a segment/arc or belongs to the interior of the part of the plane where it is chosen.

\emph{A compass and straightedge construction is a finite sequence of steps described above}. $\langle\ldots\rangle$

Summarizing, let us assume that a finite set of points of the plane is given. We say that a point $A$ \emph{is constructible} (by compass and straightedge), if there exists a construction such that the resulting set of points contains $A$ for all possible intermediate ``arbitrary'' choices. A construction problem is \emph{solvable} if it requires to find a set of points where each points is constructible by compass and straightedge from the given points.%
\footnote{\foreignlanguage{russian}{Удобно описать процесс построения индуктивно. Мы начинаем с конечного числа точек на плоскости и хотим получить конечное число точек на плоскости; процесс построения состоит в том, что к уже имеющейся системе точек мы добавляем по известным правилам ещё некоторые, а затем отбираем среди всех получившихся точек те, которые доставляют решение нашей задачи. Вторая часть, конечно, определяется спецификой задачи; нас интересует сейчас, по каким правилам \,д\,о\,б\,а\,в\,л\,я\,ю\,т\,с\,я\, точки.}

\foreignlanguage{russian}{Назовём \emph{шагом построения} добавление \emph{одной} новой точки к уже имеющимся. Отыскание этой новой точки производится в результате проведения некоторых операций; \emph{\,п\,о\, \,о\,п\,р\,е\,д\,е\,л\,е\,н\,и\,ю,\,} в построении циркулем и линейной шаг может состоять только из следующих операций (причём операции $1$ и $2$ могут применяться несколько раз, а операция $3$~--- один раз).}

\foreignlanguage{russian}{1. \emph{Проведение прямой через пару точек имеющейся совокупности}. (Эта совокупность является результатом предыдущего шага или представляет собой первоначально заданную систему точек.)}

\foreignlanguage{russian}{2. \emph{Проведение окружности с центром в одной из точек имеющейся совокупности и проходящей через некоторую другую точку этой совокупности.}} $\langle\ldots\rangle$

\foreignlanguage{russian}{3. \emph{Выбор одной точки пересечения построенных прямых и окружностей между собой и добавление этой точки к имеющейся совокупности}.}

\foreignlanguage{russian}{Вместо операции $3$, которая представляет собой выбор \,о\,п\,р\,е\,д\,е\-\,л\,ё\,н\,н\,о\,й\, \,т\,о\,ч\,к\,и,\, иногда оказывается необходимым пользоваться операцией}

\foreignlanguage{russian}{3a. \emph{Выбор <<произвольной>> точки и добавление её к имеющейся совокупности}.}

\foreignlanguage{russian}{Следует уточнить здесь употребление слова <<произвольной>>. По определению это означает, что точку можно <<произвольно>> выбирать либо на некотором отрезке прямой, либо на дуге окружности, либо же в части плоскости, ограниченной отрезками или дугами, и, возможно, уходящей в бесконечность. При этом все фигурирующие прямые и окружности должны быть построены на данном шаге, а концы отрезков и дуг должны быть точками имеющейся совокупности. Можно считать, конечно, что <<произвольная>> точка не должна лежать на концах отрезков и дуг или на границе упомянутой части плоскости.}

\foreignlanguage{russian}{\emph{Построением с помощью циркуля и линейки называется последовательность, состоящая из конечного числа описанных шагов}.} $\langle\ldots\rangle$

\foreignlanguage{russian}{Итак, пусть задана некоторая конечная совокупность точек на плоскости; мы считаем, что точку  $A$ \emph{можно построить} (с помощью циркуля и линейки), если существует такое построение, что (независимо от промежуточных <<произвольных>> выборов точек!) система точек, полученная в результате этого построения, содержит точку $A$. Задачу на построение мы считаем \emph{разрешимой}, если совокупность точек, которые следует найти для решения этой задачи, состоит только из таких точек, которые можно построить с помощью циркуля и линейки. }}
\end{quote}
Note that, being literally understood, this definition makes no sense. Indeed, a ``construction'' that should exist is a finite sequence of steps as described, so it (if understood in a natural sense) determines the ``arbitrary'' points that were added during operations of type 3a. So we cannot add the quantifier ``for all possible intermediate choices''.

How can we modify the definitions to make them rigorous? One of the possibilities is to consider the construction as a strategy in some game with explicitly defined rules. We discuss this approach in the next section.

\section{Game Definition}\label{sec:game}

The natural interpretation of the ``arbitrary choice'' is that the choice is made by an adversary.\footnote{Huckenbeck~\cite{huckenbeck2} uses a similar approach; however, he considers a rather strange condition on the arbitrary point: it should be one of the values of a fixed multi-valued function when the argument is an already constructed point. He also does not considers arbitrary strategies in such a game, but only strategies defined by straight-line programs.} In other words, we consider a game with two players, Alice and Bob. We start with the non-uniform version of this game.

Let $E$ be some finite set of geometric objects (points, lines, and circles). To define which objects $x$ are constructible starting from $E$, consider the following full information game. The \emph{position} of the game is a finite set of geometric objects. The initial position is $E$. During the game, Alice and Bob alternate. They have full information about the current position. Alice makes some requests, and Bob fulfills these requests by adding some elements to the current position. Alice wins the game when $x$ becomes an element of the current position. The number of moves is unbounded, so it is possible that the game is infinite (if $x$ never appears in the current position); in this case Alice does not win the game.

Here are possible request types.
\begin{itemize}
\item Alice may ask Bob to add to the current position some straight line that goes through two different points from the current position.
\item Alice may ask Bob to add to the current position a circle with center $A$ and radius $BC$, if $A,B,C$ are points from the current position.\footnote{We allow both $B$ and $C$ be different from $A$, as it is usually done, but this does not matter much.}
\item Alice may ask Bob to add to the current position one or two points that form the intersection of two different objects (lines or circles) that already belong to the current position.
\end{itemize}
If we stop here, we get exactly the notion of derivable points, though in a strange form of a ``game'' where Bob has no choice. To take the ``arbitrary'' points into account, we add one more operation:
\begin{itemize}
\item Alice specifies an open subset of the plane (say, an open circle), and Bob adds some point of this subset to the current position.
\end{itemize}
The game (depending on $E$ and $x$) is now described, and the point $x$ is \emph{constructible} from $E$ if Alice has a winning strategy in this game.

Let us comment on the last operation. 

(1)~Note that Alice cannot (directly) force Bob to choose some point on a line or on a circle, and this is often needed in the standard geometric constructions. But this is inessential since Alice can achieve this goal in several steps. First she asks to add points on both sides of the line or circle (selecting two small open sets on both sides in such a way that every interval with endpoints in these open sets intersects the line or circle), then asks to connect these points by a line, and then asks to add the intersection point of this new line and the original one. 

(2)~On the other hand, according to our rules, Alice can specify with arbitrarily high precision where the new point should be (by choosing a small open set). A weaker (for Alice) option would be to allow her to choose a connected component of the complement of the union of all objects in the current position. Then Bob should add some point of this component to the current position. (Recall that Alice and Bob have full information about the position.)

\begin{proposition}
This restriction does not change the notion of a constructible point.
\end{proposition}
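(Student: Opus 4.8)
The point is to show that a seemingly weaker version of Alice's "arbitrary point" request — choosing a connected component $U$ of the complement of the union of all objects currently in the position, rather than an arbitrary open set — still lets Alice simulate the stronger request. Since the weaker request is literally a special case of the stronger one (a connected component is in particular an open set), one direction is trivial: any strategy in the weaker game is legal in the stronger game, so "constructible in the weak sense" implies "constructible in the strong sense". The substantive direction is the converse: Alice, restricted to choosing connected components, must be able to force Bob to place a point inside any prescribed small open disc $D$.

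**The key steps.** Let me describe the simulation. Suppose in the strong game Alice wants a point in an open disc $D$. In the weak game she first draws enough auxiliary lines and circles to "cut" the plane into small pieces: concretely, she repeatedly asks Bob to connect pairs of existing points by lines, and — after getting a couple of new points via component requests — draws circles, building up a configuration $\mathcal C$ whose arrangement has at least one cell entirely contained in $D$. The difficulty is that to draw a line Alice needs two points already in the position, and initially she may have very few; so the first substantive sub-step is: starting from whatever $E$ contains (at least, to make the statement nonvacuous, enough to get going — or else one first uses a component request to get a fresh point), use a bounded number of component requests plus line/circle requests to obtain a rich enough point set that Alice can draw an arrangement of lines and circles having a cell inside $D$. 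Once such an arrangement is present, the bounded cell contained in $D$ is a connected component of the complement of the union of all current objects; Alice requests that component, and Bob is forced to add a point of it, hence a point of $D$. This shows each strong "open set $D$" request is simulated by finitely many weak requests, and composing these simulations turns any winning strategy in the strong game into a winning strategy in the weak game.

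**The main obstacle.** The crux is producing, with only component-type requests and the deterministic line/circle/intersection requests, an arrangement having an arbitrarily small cell at a prescribed location, even though Bob adversarially places the points Alice gets from her component requests. The idea: Alice asks for a point in a large component, gets some point $P_1$ (wherever Bob likes), asks for another, gets $P_2$; the line $P_1P_2$ now subdivides the plane, creating smaller components. She iterates: each new line or circle she is able to draw through available points further refines the arrangement near $D$, and by intersecting lines she manufactures new points lying in controlled regions. One must argue that after finitely many rounds some cell lies inside $D$ regardless of Bob's choices — the point being that Alice can always draw a line through two points on opposite sides of $D$ (obtaining such points via component requests into the two half-planes or regions flanking $D$, exactly as in remark (1) of the excerpt), and three such lines in general position already trap a bounded triangular cell, which she can then shrink by adding a circle of small radius around a point she has placed near $D$. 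Making "near $D$" precise — getting a point provably close to $D$ out of a component request, by first shrinking the relevant component using previously drawn objects — is the fiddly part, but it is the same maneuver used in remark (1) and requires no new ideas, only careful bookkeeping of which components exist at each stage. Finally one notes the whole simulation uses only finitely many moves per simulated step, so infinite plays are not introduced and Alice's winning condition ($x$ appears) is preserved.
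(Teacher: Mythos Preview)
Your overall plan and the trivial direction are fine. The gap is exactly where you yourself flag ``the fiddly part'': forcing a cell of the arrangement to lie inside the prescribed disc $D$ when Bob places every requested point adversarially. Your justification appeals to remark~(1), but that remark operates in the \emph{strong} game --- Alice there selects \emph{small} open sets on either side of a given curve, which is precisely the capability you are trying to simulate. In the weak game Alice may only name a connected component of the complement of the current configuration; until some line or circle already passes through $D$, the component containing $D$ may be enormous, and Bob is free to drop the requested point arbitrarily far away. Your scheme for obtaining ``points on opposite sides of $D$'', and then ``a point near $D$'' around which to draw a small circle, therefore presupposes that the arrangement has already been refined near $D$, which is what you were trying to achieve. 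The argument is circular, not merely bookkeeping.

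The missing idea is the one the paper supplies: after a \emph{fixed} number of component requests --- four: a point $A$, a second point $B$, a point $C$ off the line $AB$, and a point inside the triangle $ABC$ --- the set of points \emph{derivable} from these four by the deterministic straightedge operations alone is already dense in the whole plane, \emph{regardless} of where Bob put them. (This is a classical projective midpoint iteration.) Once Alice has density she does not need your extra step of carving out a small cell and issuing one more component request; she simply \emph{derives} a point lying in the target open set $U$ directly. Your route can be repaired by invoking this same density fact to manufacture the points through which your refining lines and circles are to be drawn, but at that moment the final component request becomes redundant and the argument collapses to the paper's.
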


\begin{proof}
Idea: Using the weaker option, Alice may force Bob to put enough points to make the set of derivable points dense, and then use the first three options to get a point in an arbitrary open set.  

Let us explain the details. First, she asks for an arbitrary point $A$, then for a point $B$ that differs from $A$, then for line $AB$, then for a point $C$ outside line $AB$ (thus getting the triangle $ABC$), then for the sides of this triangle, and then for a point $D$ inside the triangle. (All this is allowed in the restricted version.)
\begin{center}
\includegraphics{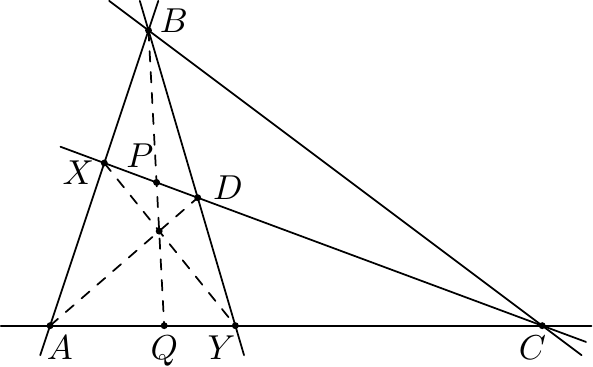}
\end{center}
Now the points $P$ and $Q$ obtained as shown in the picture are derivable (after the projective transformation that moves $B$ and $C$ to infinity, the points $P$ and $Q$ become the midpoints of $XD$ and $AY$). Repeating this construction, we get a dense set of derivable points on intervals $XD$ and $AY$, then the dense set of derivable points in the quadrangle $AXDY$, and then in the entire plane.

Now, instead of asking Bob for a point in some open set $U$, Alice may force him to include one of the derivable points (from the dense set discussed above) that is in~$U$.  
\end{proof}

This definition of constructibility turns out to be equivalent to the negative definition suggested by Akopyan and Fedorov~\cite{akopyan-fedorov2017}. They define non-constructibility as follows: an object $x$ is \emph{non-constructible} from a finite set $E$ of objects if there exists a set $E'\supset E$ that is closed under the operations of adding points, lines, and circles (contains all objects derivable from $E'$), contains an everywhere dense set of points, but does not contain~$x$.

\begin{proposition}[Akopyan--Fedorov]
This negative definition is equivalent to the game-theoretic definition given above.
\end{proposition}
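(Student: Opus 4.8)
The plan is to prove the two directions of the equivalence separately, reading the game in its open-set form (by the previous proposition the connected-component form yields the same notion of constructibility, so nothing is lost).

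For the implication ``Akopyan--Fedorov non-constructible $\Rightarrow$ Alice has no winning strategy'' I would feed the witnessing set to Bob as a source of moves. Suppose $E'\supseteq E$ is closed under the three deterministic operations, has a dense set of points, and omits $x$. Then Bob can keep every position a subset of $E'$: when Alice asks for a line, a circle, or an intersection, the (forced) result already lies in $E'$ by closure; when Alice asks for an arbitrary point of an open set $U$, density gives a point of $E'$ inside $U$, which Bob plays. By induction every position is contained in $E'$, so $x$ never appears and Alice never wins; hence $x$ is not constructible.

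For the converse I would first observe that the game is \emph{open for Alice}: the plays she wins are exactly those in which $x$ occurs in the position after finitely many rounds, and whether this has happened by round $n$ depends only on the first $n$ rounds, so Alice's winning set is open in the tree of plays. The Gale--Stewart theorem applies to open games with arbitrary (not necessarily countable) move sets, so the game is determined; since $x$ is not constructible, Alice has no winning strategy, and therefore Bob has a winning strategy $\sigma$. Now I would extract a witnessing $E'$ from a single play against $\sigma$. Let Alice follow a fixed \emph{fair} strategy $\tau$: she keeps a FIFO queue of pending deterministic requests, on each turn she appends to it every deterministic request whose inputs are all currently present and that has not been queued yet (only finitely many new ones per turn, since each round adds finitely many objects), she issues the request at the head of the queue, and, interleaved, on the $k$-th turn she also requests an arbitrary point of the $k$-th ball in a fixed enumeration of the open balls with rational centre and radius. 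Let $\pi$ be the play of $\tau$ against $\sigma$ and put $E':=\bigcup_n(\text{position of }\pi\text{ at round }n)$. Then $E\subseteq E'$; $E'$ is closed under the three operations, because any deterministic request applicable to finitely many objects of $E'$ is enqueued once those objects have all appeared and is then issued within finitely many rounds (only finitely many requests precede it in the queue at that moment), Bob's forced answer adding the resulting object to some position; the points of $E'$ are dense, since Alice's $k$-th arbitrary-point request forces a point into the $k$-th rational ball; and $x\notin E'$ because $\sigma$ is winning for Bob. So $E'$ witnesses non-constructibility in the sense of Akopyan--Fedorov.

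The hard part will be the converse, and within it two points deserve care. First, one must recognize the game as open for Alice and invoke Gale--Stewart in the full generality of arbitrary move sets (Alice may choose from a continuum of open sets, Bob from a continuum of points) in order to obtain Bob's winning strategy $\sigma$. Second, the strategy $\tau$ must be designed so that a \emph{single} run against $\sigma$ already realizes all of the deterministic closure and a dense family of arbitrary-point choices; the dovetailing bookkeeping --- guaranteeing that every eventually-applicable deterministic request is actually issued --- is routine but is the place where one must be careful.
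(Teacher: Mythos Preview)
Your proof is correct. The first direction matches the paper's argument exactly. For the converse, however, the paper takes a much shorter and more elementary route that avoids Gale--Stewart entirely.

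The paper observes that Alice can begin by issuing four arbitrary-point requests for small neighbourhoods of the vertices of a fixed triangle $abc$ and of a fixed interior point $d$; Bob's replies $A,B,C,D$ then form a triangle with an interior point. Now either Alice has a winning strategy after \emph{every} such reply (and then she wins the whole game, contradicting non-constructibility), or there is some reply $A,B,C,D$ after which she has none. In the second case one simply takes $E'$ to be the set of objects derivable from $E\cup\{A,B,C,D\}$: it is closed by definition, it is dense by the argument of the previous proposition, and it cannot contain $x$, since a derivation of $x$ would itself be a winning continuation for Alice. Thus a single four-move prefix already produces the witness $E'$, with no need for determinacy of the full game or for a dovetailing fair strategy.

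What each approach buys: the paper's argument is strictly more elementary (no appeal to Gale--Stewart, no infinite bookkeeping), and it exploits the specific structure established in Proposition~1, namely that four well-placed points already generate a dense derivable set. Your approach, by contrast, is a general-purpose template: it would work verbatim for any variant of the game whose winning condition is open for Alice, even if no analogue of the four-point density lemma were available. Both are valid; the paper's is the one tailored to the situation at hand.
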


\begin{proof}
The equivalence is essentially proven as~\cite[Proposition 15, p.~9]{akopyan-fedorov2017}, but Akopyan and Fedorov avoided stating explicitly the game-theoretic definition and spoke about ``algorithms'' instead (without an exact definition).

Assume that $x$ is non-constructible from $E$ according to the negative definition. Then Bob can prevent Alice from winning by always choosing points from $E'$ when Alice asks for a point in an open set. Since $E'$ is dense, these points are enough. If Bob follows this strategy, then the current position will always be a subset of $E'$ and therefore will never contain $x$.

On the other hand, assume that $x$ in not constructible from $E$ in the sense of the positive definition. Consider the following strategy for Alice. She takes some triangle $abc$ and point $d$ inside it and asks Bob to add points $A,B,C,D$ that belong to some small neighborhoods of $a,b,c,d$ respectively. The size of these neighborhoods guarantees that $ABC$ is a triangle and $D$ is a point inside $ABC$. There are two cases:
\begin{itemize}
\item for every choice of Bob Alice has a winning strategy in the remaining game;
\item there are some points $A,B,C,D$ such that Alice does not have a winning strategy in the remaining game.
\end{itemize}
In the first case Alice has a winning strategy in the entire game and $x$ is constructible in the sense of the positive definition. In the second case we consider the set $E'$ of all objects derivable from $E\cup\{A,B,C,D\}$. As we have seen in the proof of the previous proposition, this set is dense. Therefore, $x$ is non-constructible in the sense of the negative definition.
\end{proof}

The advantage of the game definition is that it can be reasonably extended to the uniform case. For the uniform case the game is no more a full-information game. Alice sees only the names (and types) of geometric objects in $E$, and assigns names to new objects produced by Bob. One should agree also how Alice can get information about the configuration and how she can specify the connected component when asking Bob for a point in this component. For example, we may assume that Alice has access to the list of all connected components and the full topological information about the structure they form, as well as the places of objects from $E$ in this structure. Then Alice may choose some component and request a point from it. To win, Alice also needs to specify the name of the required object $x$. After we agree on the details of the game, we may define construction algorithms as computable strategies for such a game. (Note that in this version Alice deals only with finite objects.)

\section{Formal Definitions Are Important}\label{sec:error}

In fact, the absence of formal definitions and exact statements is more dangerous than one could think. It turned out that some classical and well known arguments contain a serious gap that cannot be filled without changing the argument. This happened with a proof (attributed to Hilbert in~\cite{cauer1913}) that one cannot find the center of a given circle using only a straightedge. It is reproduced in many popular books (see, e.g.,~\cite{rademacher1933,courant1996,kac-ulam1968}) and all the arguments (at least in the four sources mentioned above) have the same gap. They all go as follows~\cite[p.~18]{kac-ulam1968}: 
\begin{quotation}
Let the construction be performed in a plane $P_1$ and imaging a transformation or mapping $T$ of the plane $P_1$ into another plane $P_2$ such that:

(a) straight lines in $P_1$ transform into straight lines in $P_2$ $\langle\ldots\rangle$

(b) The circumference $C$ of our circle is transformed into a circumference $T(C)$ for some circle in $P_2$.

As the steps called for in the construction are being performed in $P_1$, they are being faithfully copied in $P_2$. Thus when the construction in $P_1$ terminates in the centre $O$ of $C$, the ``image'' construction \emph{must} terminate in the centre $T(O)$ of the circle $T(C)$.

Therefore if one can exhibit a transformation $T$ satisfying (a) and (b), but such that $T(O)$ \emph{is not} the centre of $T(C)$, then the impossibility of constructing the centre of a circle by ruler alone will be demonstrated.
\end{quotation}
Such a transformation indeed exists, but the argument in the last paragraph has a gap. If we understand the notion of construction in a non-uniform way and require that the point was among the points constructed,  the argument does not work since the center of $T(C)$ could be the image of some other constructed point. If we use some kind of the uniform definition and allow tests, then these tests can give different results in $P_1$ and $P_2$ (the projective transformation used to map $P_1$ into $P_2$ does not preserve the ordering), so there is no reason to expect that the construction is ``faithfully copied''. And a uniform definition that does not allow tests and still is reasonable, is hard to imagine (and not given in the book). Note also that some lines that intersect in $P_1$, can become parallel in $P_2$.

It is easy to correct the argument and make it work for the definition of constructibility given above (using the fact that there are many projective mappings that preserve the circle), but still one can say without much exaggeration that the first correct proof of this impossibility result appeared only in~\cite{akopyan-fedorov2017}. One can add also that the stronger result about two circles that was claimed by Cauer~\cite{cauer1913} and reproduced with a similar proof in~\cite{rademacher1933}, turned out to be plainly false as shown in~\cite{akopyan-fedorov2017}, and the problems in the proof were noted already by Gram~\cite{gram1956}. It is not clear why Gram did not question the validity of the classical proof for one circle, since the argument is the same. Gram did not try to give a rigorous definition of the notion of a geometric construction, speaking instead about constructions in the ``ordered plane'' and referring to Bieberbach's book~\cite{bieberbach1952} that also has no formal definitions.

The weak version of Cauer's result saying that for some pairs of circles one cannot construct their centers, can be saved and proven for the definition of constructibility discussed above~(see \cite{akopyan-fedorov2017} and the popular exposition in~\cite{shen2018}).

It would be interesting to reconsider the other results claimed about geometric constructions (for example, in~\cite{hilbert1902,tietze1909,tietze1940,tietze1944}) to see whether the proofs work for some clearly defined notion of a geometric construction. Note that in some cases (e.g., for Tietze's results) some definition of the geometric construction for the uniform case is needed (and the negative definition is not enough).

\textbf{Acknowledgements.}
The authors thank Sergey Markelov, Arseny Akopyan, Roman Fedorov, Rupert H\"olzl and their colleagues at Moscow State University and LIRMM (Montpellier) for interesting discussions, and the referees for their comments and suggestions. The work was supported by ANR-15-CE40-0016-01 RaCAF and RFBR 16-01-00362 grants.

\end{document}